\newtheorem{theorem}{Theorem}
\newtheorem*{theorem*}{Theorem}
\newtheorem{lemma}{Lemma}
\newtheorem*{remark}{Remark}
\begin{document}

\title{On the number of spanning trees in bipartite graphs}
\author{Albina Volkova}
\date{}

\maketitle

\begin{abstract}
In this paper, we address the Ehrenborg’s conjecture which proposes that for any bipartite graph $G$ the inequality $\tau(G) \leq m^{-1}n^{-1}D(G)$ holds. Here  $\tau (G) $ denotes the number of spanning trees of $G$, $D(G)$ is the product of the degrees of the vertices and $m, n$ are the sizes of the components.
We show that the conjecture is true for a one-side regular graph (that is a graph for which all degrees of the vertices of at least one of the components are equal). We also present a proof alternative to one given in \cite{Ferrers} of the fact that the equality holds for Ferrers graphs.
\end{abstract}

\section{Introduction and preliminaries}

\par For a connected undirected graph $ G = (V, E) $, we denote by $ \tau (G) $ the number of spanning trees of $G$. In this paper, we address the problem of bounding $ \tau (G) $ from above. 

\par There are many different upper bounds for the number of spanning trees in terms of degrees of vertices, number of edges, number of vertices, and other graph characteristics.
A common approach for proving upper bounds on $ \tau (G) $ is based on a statement known as \textit{Kirchhoff's Matrix-Tree Theorem} \cite{Kirh}. Let us recall its formulation.

The \textit{Laplacian} matrix of the graph
$G=(V,E)$ is the operator acting on the space
$\mathbb{R}^V$ of functions acting on $V$ that maps a function $f(v)$ to
$$
(\Delta f)(v):=\deg(v)f(v)-\sum_{u:uv\in E} f(u).
$$ 
If the vertices are enumerated as $V=\{v_1,\ldots,v_n\}$ then the Laplacian matrix in a natural basis has the form
\[
    a_{ij}= 
\begin{cases}
    \deg(v_i), & \text{if } i=j\\
     -1 ,& \text{if } v_i v_j \in E\\
     0 & \text{otherwise.}
\end{cases}
\]

The Matrix-Tree Theorem states that $ \tau(G) $ can be computed as the cofactor of any entry in the Laplacian matrix of $ G $.

\par It is easy to see that the Laplacian is always a singular matrix. It is also known (see the paper \cite{CDS80}) that $ \tau(G) $ can be expressed in terms of the eigenvalues of this matrix as follows
$$
\tau(G) = \frac{1}{n} \prod_{i=1}^{n-1} \mu_i,
$$
where $\mu_1 \geq \dots \geq \mu_{n-1} > \mu_n = 0$
are the eigenvalues of the Laplacian of $G$.
\par Hence our problem can be viewed as a
linear-algebraic problem of estimating the product of nonzero eigenvalues of the Hermitian singular matrix. One of the first upper bounds \cite{GM} which was obtained using this approach is as follows:
 \begin{align}\label{eq:GM}
\tau(G) \leq \left(\frac{n}{n-1}\right)^{n-1} \left(\frac{\prod_{i=1}^n d_{i}}{2|E|}\right).
\end{align}
\par The key idea behind the bound above is to apply the AM-GM inequality to the normalized eigenvalues. Other recent advances in constructing upper bounds for the number of spanning trees can be found in \cite{KS19}, another overview is given in \cite{Bozkurt}.
\par In this paper, we focus on giving an upper bound for $ \tau(G) $ in case of bipartite graph. In what follows, we will consider a connected finite bipartite graph $ G = (V, E) $,  
$n + m =|V|$, where $m$ and $n$ are the sizes of the components. The degrees of the vertices in the first and the second components are denoted as
$\{ a_1, a_2, \dots, a_n\}$ and $\{b_1, b_2, \dots, b_m\}$, respectively. In addition, we introduce the notation for the product of the degrees of the vertices $D(G) = a_1a_2\dots a_n \cdot b_1b_2\dots b_m$.
\par In the above notation Richard Ehrenborg formulated a conjecture according to which for any bipartite graph the following inequality holds
\begin{align}\label{eq:Econj}
    \tau(G) \leq m^{-1}n^{-1}D(G).
\end{align}

\par In \cite{Ferrers} Ehrenborg and Willigenburg proved that the inequality \eqref{eq:Econj} is sharp for Ferrers graphs. According to \cite{Ferrers} we define a \textit{Ferrers graph} to be a bipartite graph in which the vertices of both components can be ordered in such a way that if there is an edge $(a,b)$, then all edges $(a',b)$ and $(a,b')$ for $a'<a$ and $b'<b$ are also present in the graph (see Figure \ref{fig:Ferrers_ex}).

\begin{figure}[h]
\begin{center}
\begin{tabular}{>{\centering\arraybackslash}m{.48\textwidth}>{\centering\arraybackslash}m{.48\textwidth}}
\begin{tikzpicture}[scale=.6]
\draw (0,0) -- (4,0);
\draw (0,-1) -- (4,-1);
\draw (0,-2) -- (4,-2);
\draw (0,-3) -- (3,-3);
\draw (0,-4) -- (3,-4);
\draw (0,-5) -- (1,-5);

\draw (0,0) -- (0,-5);
\draw (1,0) -- (1,-5);
\draw (2,0) -- (2,-4);
\draw (3,0) -- (3,-4);
\draw (4,0) -- (4,-2);

\draw (-.5,-.5) node {$a_1$};
\draw (-.5,-1.5) node {$a_2$};
\draw (-.5,-2.5) node {$a_3$};
\draw (-.5,-3.5) node {$a_4$};
\draw (-.5,-4.5) node {$a_5$};

\draw (0.5,0.5) node {$b_1$};
\draw (1.5,0.5) node {$b_2$};
\draw (2.5,0.5) node {$b_3$};
\draw (3.5,0.5) node {$b_4$};
\end{tikzpicture}
&
\begin{tikzpicture}[scale=1.0]
\foreach \t in {(0,0), (1,0), (2,0), (3,0), (4,0), (.5,1.5), (1.5,1.5), (2.5,1.5), (3.5,1.5)}{
	\draw[fill=black] \t circle (.1);
}
\draw (0,0) node[anchor = north] {$a_1$};
\draw (1,0) node[anchor = north] {$a_2$}; 
\draw (2,0)  node[anchor = north] {$a_3$};
\draw (3,0)  node[anchor = north] {$a_4$};
\draw (4,0)  node[anchor = north] {$a_5$};
\draw (.5,1.5)  node[anchor = south] {$b_1$};
\draw (1.5,1.5) node[anchor = south] {$b_2$}; 
\draw (2.5,1.5) node[anchor = south] {$b_3$};
\draw (3.5,1.5) node[anchor = south] {$b_4$};

\draw (0,0) -- (.5,1.5);
\draw (0,0) -- (1.5,1.5);
\draw (0,0) -- (2.5,1.5);
\draw (0,0) -- (3.5,1.5);

\draw (1,0) -- (.5,1.5);
\draw (1,0) -- (1.5,1.5);
\draw (1,0) -- (2.5,1.5);
\draw (1,0) -- (3.5,1.5);

\draw (2,0) -- (.5,1.5);
\draw (2,0) -- (1.5,1.5);
\draw (2,0) -- (2.5,1.5);

\draw (3,0) -- (.5,1.5);
\draw (3,0) -- (1.5,1.5);
\draw (3,0) -- (2.5,1.5);

\draw (4,0) -- (.5,1.5);
\end{tikzpicture}
\end{tabular}
\end{center}
\caption{The Ferrers graph and the Ferrers diagram associated with it.}
\label{fig:Ferrers_ex}
\end{figure}
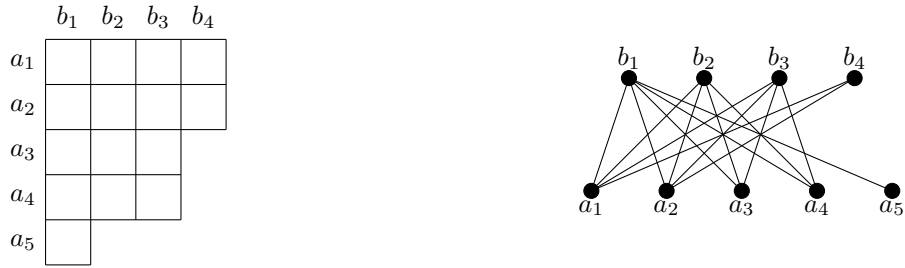

\par In Section \ref{sec:sharp}, we give an alternative proof of the sharpness of the bound \eqref{eq:Econj}. 
\par Unfortunately, Ehrenborg's conjecture remains open at the moment. Though, for a bipartite graph we know some improvement of the bound \eqref{eq:GM} . For example, in \cite{Bozkurt} the following upper bound was shown for a bipartite connected graph 
\begin{equation}\label{eq:Bozkurt}
    \tau(G) \leq \frac{\prod_{i=1}^n a_{i} \prod_{i=1}^m b_{i}}{|E|}.
\end{equation}

\par Section \ref{sec:BoundsRegular} of this paper
is devoted to Ehrenborg's conjecture in the case of a one-side regular graph (that is a graph for which all degrees of the vertices of at least one of the components are equal). We also give some generalizations of this result, which allow us to bound the number of spanning trees with certain constraints on the degree of a particular vertex.

Let's agree on some additional
notation. We use $[n]$ as a standard notation for $\{1,\ldots,n\}$, where $ n $ is a natural number;
$e_1,\ldots,e_n$ denotes the standard basis
in $\mathbb{R}^n$; $e_{ij}$ denotes
the $n \times n$ matrix for which the entry with the coordinates $(i, j)$ is equal to one and all other entries are zero; $\langle \cdot, \cdot \rangle$ is a standard dot product on $\mathbb{R}^n$.

\section{The number of spanning trees of a Ferrers graph}\label{sec:sharp}

\par The original proof of the sharpness of the Ehrenborg's conjecture for Ferrers graphs from the paper \cite{Ferrers} uses the methods of the theory of electrical network. 
In this section, we provide an alternative proof of the fact that does not use this theory. Let us formulate the main statement in the form of the following theorem.

\begin{theorem}\label{thm:sharp}
For any Ferrers graph, the equality holds
\begin{align}\label{eq:sharp}
 \tau(G) = \frac{D(G)}{mn}   
\end{align}
\end{theorem}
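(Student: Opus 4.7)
The plan is to apply the Matrix--Tree Theorem and reduce the calculation via a Schur-complement step that exploits the staircase shape of the Ferrers biadjacency matrix. Write the Laplacian in block form
\[
L = \begin{pmatrix} A & -M \\ -M^{T} & B \end{pmatrix},
\]
with $A = \operatorname{diag}(a_1, \ldots, a_n)$, $B = \operatorname{diag}(b_1, \ldots, b_m)$ and biadjacency matrix $M$ satisfying $M_{ij} = 1$ iff $j \le \lambda_i$. In any connected Ferrers graph one has $a_1 = \lambda_1 = m$, so $a_1$ is adjacent to every vertex on the $b$-side; I would delete the row and column corresponding to $a_1$, so that the Matrix--Tree Theorem identifies $\tau(G)$ with the determinant of the resulting $(n+m-1)\times(n+m-1)$ matrix.

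Applying the Schur complement to the (still invertible) block $B$ gives
\[
\tau(G) \;=\; \det(B)\cdot\det(C),\qquad C := A' - M''\,B^{-1}\,(M'')^{T},
\]
where $A' = \operatorname{diag}(a_2, \ldots, a_n)$ and $M''$ is $M$ with its first row removed. Setting $q_\ell := \sum_{j=1}^{\lambda_\ell} 1/b_j$ and using the monotonicity $\lambda_1 \ge \cdots \ge \lambda_n$, a direct calculation shows that $C$ (indexed by $i,k \in \{2,\ldots,n\}$) has the explicit form
\[
C_{ik} \;=\; a_i\,\delta_{ik} - q_{\max(i,k)}.
\]
Since $\det(B)=\prod_j b_j$ and $a_1 = m$, the theorem will follow once the identity
\[
\det(C) \;=\; \frac{1}{n}\prod_{i=2}^{n} a_i
\]
is established: combined with $a_1 = m$ it yields $\tau(G) = \tfrac{1}{nm}\prod_i a_i \prod_j b_j = D(G)/(mn)$.

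The main obstacle is proving this determinant identity. I would proceed by induction on $n$, corresponding to deleting the last row of the Ferrers diagram (so $(\lambda_1,\ldots,\lambda_n)$ is replaced by $(\lambda_1,\ldots,\lambda_{n-1})$). The subtlety is that this deletion simultaneously decreases $b_j = \lambda'_j$ by one for every $j \le \lambda_n$, so the matrix $C$ attached to the smaller Ferrers graph is \emph{not} a principal submatrix of the present $C$, and a careful bookkeeping of correction terms is required. I would handle this by performing row and column operations on $C$ that exploit the monotonicity of the sequences $(a_i)$ and $(q_i)$ to isolate the contribution of the row and column indexed by $n$, then matching the result against the inductive hypothesis. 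The base case $n=1$ is a star and checks immediately; the case $\lambda_n = 1$ is a direct pendant-vertex reduction (where the correction vanishes nicely); the technical core is the step $\lambda_n \ge 2$.
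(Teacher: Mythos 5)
Your reduction is set up correctly and is genuinely different from the paper's: the paper evaluates the mixed cofactor obtained by deleting the column of $a_1$ and the row of $b_1$, which keeps the matrix sparse with integer entries, whereas you take the principal minor at $a_1$ and then a Schur complement over the invertible block $B$. Your formula $C_{ik}=a_i\delta_{ik}-q_{\max(i,k)}$ is right (it uses $\min(\lambda_i,\lambda_k)=\lambda_{\max(i,k)}$, valid since the $\lambda_i$ are weakly decreasing), and the theorem is indeed equivalent to the identity $\det(C)=\tfrac1n\prod_{i=2}^{n}a_i$, which checks out on small examples.

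The genuine gap is that this identity is where all of the content of the theorem now lives, and you do not prove it. The Matrix--Tree step and the Schur complement are routine; everything specific to Ferrers graphs has been compressed into $\det(C)=\tfrac1n\prod_{i=2}^{n}a_i$, and for that you offer only a strategy (induction on $n$ by deleting the last row of the diagram) together with an explicit admission that the inductive step does not close as stated: deleting row $n$ replaces $b_j$ by $b_j-1$ for every $j\le\lambda_n$, hence changes every $q_\ell$ and so every entry of $C$, and the ``careful bookkeeping of correction terms'' you defer is exactly the technical core of the theorem. As written, the proof is incomplete. Two ways to close it: (a) switch to the mixed cofactor $\det\bigl(L|(n+1,1)\bigr)$ as the paper does --- there the induction peels off a full rectangular block of the Ferrers diagram at each step, and the surviving matrix is a literal submatrix of the original one (same diagonal entries $a_i$, $b_j$), so the inductive hypothesis applies verbatim with no re-normalization; or (b) keep your $C$ but prove the identity directly without touching the $b_j$: the row operations $R_i\to R_i-R_{i+1}$ for $2\le i\le n-1$ turn row $i$ into $(\delta_i,\dots,\delta_i,\,a_i+\delta_i,\,-a_{i+1},\,0,\dots,0)$ with $\delta_i=q_{i+1}-q_i$, giving a Hessenberg matrix whose determinant satisfies a two-term recursion in which the Ferrers structure enters only through the telescoping sums $\delta_i$; this still requires a computation, but it stays inside one fixed graph.
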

To prove this statement, we will be inductively calculating the determinant of the matrix of the following form:

\begin{equation}\label{eq:MinorBlock}
M=
\begin{tikzpicture}[baseline=(m-5-1.base)]
\matrix [matrix of math nodes,left delimiter=(,right delimiter=),
             column sep=0pt,
             row sep=0pt,
             nodes={
                    minimum size=1.4em,
                    anchor=center
                    }
             ] (m) {
       0     &  0     & 0      & \dots  & 0      & -1     & -1     & \dots  & -1    & -1 \\
      a_2    & 0      & 0      & \dots  & 0      & -1     & \;     & \;     & \;    & \;\\
      0      & a_3    & 0      & \dots  & 0      & -1     & \;     & \;     & \;    & \;\\
      \vdots & \vdots & \vdots & \ddots & \vdots & \vdots    & \;     & \;     & \;    & \;\\
        0    &  0     & 0      & \dots  &a_n     & -1     & \;     & \;     & \;    & \;\\
        \;   &  \;    & \;     & \;     & \;     & 0      & b_2    &\dots   & 0     & 0  \\
     \;      &  \;    & \;     & \;     & \;     & \vdots & \vdots & \ddots & \vdots& \vdots\\
      \;     &  \;    & \;     & \;     & \;     & 0      & 0      &\dots   &b_{m-1}& 0 \\ 
      \;     &  \;    & \;     & \;     & \;     & 0      & 0      &\dots   & 0     & b_m \\};

\draw[dashed] ($0.5*(m-1-5.north east)+0.5*(m-1-6.north west)$) -- ($0.5*(m-9-6.south east)+0.5*(m-9-5.south west)$);
\draw[dashed] ($0.5*(m-5-1.south west)+0.5*(m-6-1.north west)$) -- ($0.5*(m-5-10.south east)+0.5*(m-6-10.north east)$);

\draw[dashed] ($(m-2-6.north east)$) -- ($(m-2-10.north east)$);
\draw[dashed] ($(m-2-6.north east)$) -- ($(m-5-6.south east)$);

\draw (m-8-3.north) node {$C^T$};
\draw (m-4-9.north west) node {$C$};

\end{tikzpicture}
\end{equation}
\\
\par The rows of the matrix \eqref{eq:MinorBlock} are ordered in such a way that $ a_2 \geq \dots \geq a_n $ and, similarly, $ b_2 \geq \dots \geq b_m $. The sum of the elements in each of the rows from the second to the $n$-th, as well as in each of the last $ m-1 $ columns is equal to zero. Matrix $C$ denotes a Ferrers matrix, i.e., minus ones in its entries form a Ferrers diagram. In addition, the first row of matrix $ C $ consists entirely of minus ones.
\begin{lemma}\label{thm:mainlem}
The determinant of the matrix $ M $ described above is equal to $(-1)^n \prod_{i=2}^n a_i \prod_{j=2}^m b_j$.
\end{lemma}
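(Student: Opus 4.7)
The plan is to recognize a block upper-triangular structure of $M$ that reduces $\det M$ to a product of two elementary factors, avoiding any genuine induction.

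Split $M$ into a $2\times 2$ block form using the partitions between rows $n$ and $n+1$ and between columns $n$ and $n+1$:
$$
M \;=\; \begin{pmatrix} P & Q \\ 0 & D \end{pmatrix},
$$
where $P$ is the top-left $n\times n$ block, $Q$ is the top-right $n\times(m-1)$ block, and $D$ is the bottom-right $(m-1)\times(m-1)$ block. The bottom-left $(m-1)\times n$ block is identically zero: its first $n-1$ columns are explicitly zero in the display of $M$, and the $n$-th column restricted to the bottom rows is zero as well, since the $b$-diagonal of the bottom-right pattern only begins in column $n+1$. Consequently $\det M = \det P\cdot\det D$, and since $D = \operatorname{diag}(b_2,\dots,b_m)$, one has $\det D = \prod_{j=2}^m b_j$.

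It remains to evaluate $\det P$. By hypothesis, the first row of $C$ consists entirely of $-1$'s, so the $(1,n)$-entry of $P$ equals $-1$, while all other entries of the first row of $P$ vanish. For $i=2,\dots,n$, row $i$ of $P$ has $a_i$ in column $i-1$ and $C_{i,1}\in\{0,-1\}$ in column $n$, with zeros elsewhere. Cofactor expansion of $\det P$ along the first row therefore leaves only the $(1,n)$ contribution, giving
$$
\det P \;=\; (-1)^{1+n}\cdot(-1)\cdot\det P' \;=\; (-1)^n\det P',
$$
where $P'$ is the $(n-1)\times(n-1)$ minor obtained by deleting row $1$ and column $n$ from $P$. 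This submatrix is diagonal with entries $a_2,\dots,a_n$ in positions $(1,1),(2,2),\dots,(n-1,n-1)$, so $\det P' = \prod_{i=2}^n a_i$. Multiplying the two factors yields
$$
\det M \;=\; (-1)^n\prod_{i=2}^n a_i\prod_{j=2}^m b_j,
$$
as claimed.

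I do not anticipate any substantial obstacle, since the argument amounts to one block-triangular factorization followed by a single cofactor expansion. The particular values of $C_{i,1}$ for $i\ge 2$, and the entire sub-block $Q$ (which records the remaining columns of the Ferrers matrix $C$), are absorbed by the block decomposition and the cofactor step and never enter the final answer; in particular the Ferrers structure of $C$ beyond its first row is not actually used in proving the lemma itself.
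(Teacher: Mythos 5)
Your decomposition rests on the claim that the bottom-left $(m-1)\times(n-1)$ block of $M$ is zero, and that is a misreading of the matrix \eqref{eq:MinorBlock}. The blank entries in that display are not zeros: they are the regions occupied by the blocks labelled $C$ (top right) and $C^T$ (bottom left), i.e.\ the Ferrers adjacency pattern and its transpose. Concretely, $M$ is the matrix \eqref{eq:blocks} with the first column and the $(n+1)$-th row deleted, so the entry of $M$ in the row corresponding to $b_j$ and the column corresponding to $a_i$ (for $i,j\ge 2$) equals $-1$ whenever $a_ib_j\in E$; Figure \ref{fig:ColorBlocks} displays these $-1$'s explicitly. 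Hence $M$ is not block upper triangular, and $\det M=\det P\cdot\det D$ does not follow from what you wrote.

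The error is not cosmetic. Your argument uses nothing about $C$ except that its first row is all $-1$'s --- you even point this out --- but the lemma is false under only that hypothesis. Take $n=m=3$ with $a_1$ and $b_1$ joined to every vertex of the other side and two further edges $a_2b_3$ and $a_3b_2$ (not a Ferrers graph, since $N(a_2)$ and $N(a_3)$ are incomparable). All degrees agree with those of the Ferrers graph with row lengths $3,2,2$, so your computation returns $(-1)^3\cdot(2\cdot 2)\cdot(2\cdot 2)=-16$; but a direct calculation gives $\det M=-15$, equivalently $\tau(G)=15$ rather than $D(G)/(mn)=16$. So the bottom-left block genuinely contributes to the determinant, and any correct proof must exploit the Ferrers structure of $C$ beyond its first row. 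That is precisely what the paper's induction does: it subtracts the first row from the $k$ rows of full length $m$, strips the resulting singleton rows together with the rows/columns attached to the $l$ shortest $b$-columns, and recurses on a smaller matrix of the same form. (For Ferrers graphs $\det M$ does happen to equal the value your block formula produces, but that coincidence is exactly the content of the lemma and cannot be assumed.)
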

First of all, let us show how Theorem
\ref{thm:sharp} follows from the statement above. As already noted, the cofactor of any element in the Laplacian of a graph is equal to the number of spanning trees. By definition, we can always place rows and columns of the Laplacian of the given graph $G$ in such a way that the resulting matrix takes the following form

\begin{align}\label{eq:blocks}
    \begin{pmatrix}
    A & D  \\
    D^T & B
  \end{pmatrix}  
\end{align}
\\ 
Here $ A $ and $ B $ are diagonal matrices of sizes $n \times n $ and $ m \times m $, respectively, and $ D $ is a Ferrers matrix.
Moreover, we can assume that at least the first two rows of $ D $ consist entirely of $ -1 $. This corresponds to the fact that $G$ has no vertices of degree one. Indeed, the removal of vertices of degree one does not affect neither the number of spanning trees nor the right-hand side of the equality \eqref{eq:sharp}.
\par Now consider the matrix that can be obtained from \eqref{eq:blocks} by eliminating the first column and the $(n+1)$-th row. This matrix will have the form \eqref{eq:MinorBlock}.
Combining together Lemma \ref{thm:mainlem} and Kirchhoff’s Matrix-Tree Theorem we get

$$\tau(G) = (-1)^{n+2} (-1)^n \prod_{i=2}^n a_i \prod_{j=2}^m b_j = \frac{\prod_{i=1}^n a_i \prod_{j=1}^m b_j}{mn},$$
which corresponds to the statement of the Theorem \ref{thm:sharp}, since the maximum degrees $ a_1 $ and $ b_1 $ in the Ferrers graph are exactly equal to $ m $ and $ n $ respectively.
\par Now let us return to the proof of the lemma.

\begin{proof}[Proof of Lemma 1]

The proof is by induction on the size of the matrix. The base case is trivial. Suppose that the statement of the Lemma \ref{thm:mainlem} is fulfilled for any matrix of the form  \eqref{eq:MinorBlock}, the size of which does not exceed $m+n-2$. 
\par Consider the maximum $ k $ such that in all rows of $ M $ from the first to the $ k $-th, the number of minus ones is equal to $ m $. We noted earlier that $k \geq 2$.

\begin{figure}[h]
\begin{center}
\begin{tikzpicture}
\matrix [matrix of math nodes,left delimiter=(,right delimiter=),column sep=0pt,
             row sep=0pt,
             nodes={
                    minimum size=1em,
                    anchor=center
                    }] (m) {
      0   & \dots   & 0  & 0   & \dots   & 0   & -1   & -1  &\dots & -1 & -1   &\dots   & -1  \\
      a_2 & \dots& 0     & 0      & \dots & 0   & -1   & -1   &\dots & -1   &-1    &\dots  &-1  \\
    \vdots&\ddots&\vdots & \vdots &\dots &\vdots   & \vdots   &\vdots&\dots & \vdots   &\vdots&\dots & \vdots  \\
      0   &\dots & a_k   & 0      & \dots & 0   & -1 & -1   &\dots & -1   &-1    &\dots  &-1  \\
      0   & \dots& 0     &a_{k+1} & \dots & 0   & -1 & -1   &\dots   & -1  & 0    & \dots     & 0  \\
   \vdots &\dots&\vdots  & \vdots&\ddots & \vdots   & \vdots   & \vdots   &\dots   &\vdots   & \vdots &\dots&\vdots\\
      0   & \dots& 0     & 0      & \dots     & a_n & -1   & *   &\dots  & *   & 0    &\dots     & 0  \\
      
      -1  & \dots   & -1          & -1     & \dots   & *  &0  & b_2   & \dots   & 0   & 0        & \dots      & 0  \\
      \vdots  & \dots    & \vdots    & \vdots      & \dots     & \vdots   & \vdots   & \vdots   &\ddots   & \vdots   & \vdots &\dots&\vdots  \\
      -1   & \dots    & -1     & -1      & \dots     & *  & 0   & 0   & \dots   & b_{m-l}   & 0        & \dots     & 0  \\
      -1  & \dots&-1     & 0& \dots     &0  & 0    & 0   &\dots & 0   &\;b_{m-l+1} & \dots  & 0  \\
    \vdots&\dots&\vdots &\vdots& \dots& \vdots    & \vdots   & \vdots  & \dots  & \vdots   &\vdots    &\ddots  &\vdots  \\
      -1  & \dots&-1     & 0 &\dots    & 0   & 0   & 0   &\dots & 0   & 0        &\dots   & b_m \\};
\begin{scope}[on background layer]
            \node[fit=($0.8*(m-2-1.north west)+0.2*(m-2-2.north west)$)(
            $0.9*(m-4-13.south west)+0.1*(m-3-13.south west)$), draw=gray!30, fill=gray!30, rounded corners] {};
            \node[fit=(m-1-1)(m-13-3), draw=gray!30, fill=gray!30, rounded corners] {};
            \node[fit=($0.8*(m-11-1.north west)+0.2*(m-11-2.north west)$)(m-13-13), draw=gray!30, fill=gray!30, rounded corners] {};
            \node[fit=(m-1-11)(m-13-13), draw=gray!30, fill=gray!30, rounded corners] {};
            \node[fit=($0.8*(m-2-1.north west)+0.2*(m-2-2.north west)$)(m-4-3), fill=gray, rounded corners] {};
            
            \node[fit=($0.8*(m-11-1.north west)+0.2*(m-11-2.north west)$)(m-13-3), fill=gray, rounded corners] {};
            \node[fit=(m-2-11)(m-4-13), fill=gray, rounded corners] {};
            \node[fit=($0.8*(m-11-11.north west)+0.2*(m-11-12.north west)$)(m-13-13), fill=gray, rounded corners] {};
        \end{scope} 

\draw[dashed] ($0.5*(m-1-6.north east)+0.5*(m-1-7.north west)$) -- ($0.5*(m-13-7.south east)+0.5*(m-13-6.south west)$);

\draw[dashed] ($0.5*(m-7-1.south west)+0.5*(m-8-1.north west)$) -- ($0.5*(m-7-13.south east)+0.5*(m-8-13.north east)$);

\node[above=0pt of m-1-1] (top-1) {};
\node[above=1pt of m-1-5] (top-5) {};
\node[above=0pt of m-1-11] (top-11) {};
\node[above=0pt of m-1-13] (top-13) {};

\node[right=4pt of m-1-13] (right-2) {};
\node[right=4pt of m-4-13] (right-4) {};
\node[right=4pt of m-6-13] (right-6) {};
\node[right=4pt of m-9-13] (right-8) {};

\node[rectangle,above delimiter=\{] (del-top-1) at ($0.5*(top-11.south) +0.5*(top-13.south)$) {\tikz{\path (m-1-11.south west) rectangle (m-1-13.south east);}};

\node[above=10pt] at (del-top-1.north) {$l$};

\node[rectangle,right delimiter=\}] (del-right-1) at ($0.5*(right-2.west) +0.5*(right-4.west)$) {\tikz{\path (right-2.north east) rectangle (right-4.south west);}};
\node[right=26pt] at (del-right-1.west) {$k$};

\end{tikzpicture}

\end{center}
\caption{The matrix $M$ with highlighted blocks.}
\label{fig:ColorBlocks}
\end{figure}

\par Subtract the first row of the matrix from all rows from the second to the $ k $-th. After that all these rows will contain a single non-zero number $ a_2, a_3, \dots a_k $, respectively. Hence the determinant of the required matrix $ M $ can be calculated as 
$$
\det(M) = \det(M_1) (-1)^{k-1} \prod_{i=2}^ k a_i,
$$ 
where $ M_1 $ is a square matrix of size $ (n + m) -k $, which is obtained from matrix $ M $ by removing rows from the second to the $ k $-th and columns from the first to the $ (k-1) $-th.

\par Further, we denote by $ l <m $ the number of
minus ones in the $ (k + 1) $-th row of matrix $ M $. Then in each of the last $ l $ rows of matrix $ M_1 $ there is only one nonzero element $ b_ {m-l + 1}, \dots, b_m $ respectively. By eliminating all these rows and the corresponding columns we obtain a new square matrix $ M_2 $ of the form \eqref{eq:MinorBlock} and size $ m + n - k - l $ (which is strictly less than $ m + n-1 $, since $ k \geq 2 $ by definition of $ M $) for which the following relation holds
$$
\text{det}(M_1) = \text{det}(M_2) \prod_{j=m-l+1}^m b_j.
$$
\par Described transition from $ M $ to $ M_2 $ is illustrated in Figure \ref{fig:ColorBlocks} (non-highlighted areas correspond to $ M_2 $ matrix). Note that the property that the first two rows of the matrix have the same amount of minus ones is preserved for matrix $M_2$, since we left the first row of the original matrix $ M $ unchanged.

\par Hence we can apply the induction hypothesis to the matrix $ M_2 $. After that the determinant of the original matrix can be computed as
\[
\text{det}(M) =  \text{det}(M_2) (-1)^{k-1}  \prod \limits_{i=2}^k a_i  \prod  \limits_{j=m-l+1}^m b_j = (-1)^{k-1} (-1)^{n-k+1}\prod \limits_{i=2}^n a_i \prod_{j=2}^m b_j.
\]
\par The obtained equality exactly corresponds to the statement of the lemma.
\end{proof}

\section{Bounds for the graphs with regularity properties}\label{sec:BoundsRegular}
\par In this chapter, we will show that the Ehrenborg's conjecture \eqref{eq:Econj} holds for a one-side regular graph.
The result is most essential in a situation when the number of edges is small relative to the maximum possible. Because currently known estimates in terms of the number of edges, for example \eqref{eq:Bozkurt}, do not give a sufficiently good approximation of the conjecture statement in that case.
\par Section~\ref{sec:generalization} of this chapter is devoted to estimation of the number of spanning trees for a biregular graph with a constrain on the degree of a particular vertex. This question is a natural generalization of the main problem, since the total number of spanning trees is equal to the sum of the coefficients of the spanning trees generating polynomial, and in the section we aim to estimate the partial sums of the coefficients of this polynomial.

\subsection{Ehrenborg's conjecture for one-side regular graphs}
Let us state the main result.

\begin{theorem}\label{thm:oneside_conj}
For a bipartite one-side regular graph
$ G $ the following inequality holds:
$$\tau(G)\leqslant \frac{D(G)}{mn}.$$
\end{theorem}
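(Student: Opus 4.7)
My plan is to reduce Ehrenborg's conjecture in the one-side regular case to a scalar inequality about the squared singular values of the biadjacency matrix, via the normalized Laplacian. Assume without loss of generality that every left vertex has degree $d$, so that $|E|=nd$ and $D(G)=d^n\prod_j b_j$. Let $D:=\operatorname{diag}(d,\ldots,d,b_1,\ldots,b_m)$ and let $\mathcal{L}:=D^{-1/2}LD^{-1/2}$ be the normalized Laplacian. I will use the standard identity
\[
\tau(G)=\frac{D(G)}{2|E|}\prod_{\lambda\in\operatorname{Spec}(\mathcal{L})\setminus\{0\}}\lambda,
\]
which follows routinely from Kirchhoff's theorem via $L=D^{1/2}\mathcal{L}D^{1/2}$. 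Since $2|E|=2nd$, the target inequality $\tau(G)\leq D(G)/(mn)$ becomes the equivalent statement $\prod_{\lambda\neq 0}\lambda\leq 2d/m$.

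The bipartite block form is
\[
\mathcal{L}=I-\begin{pmatrix}0 & \hat{B}\\ \hat{B}^T & 0\end{pmatrix},\qquad \hat{B}:=\frac{1}{\sqrt{d}}BD_b^{-1/2},
\]
so the spectrum of $\mathcal{L}$ is $\{1\pm\sigma_i(\hat{B})\}_{i=1}^{r}\cup\{1\}^{|n-m|}$, where $r:=\min(n,m)$. Connectedness and bipartiteness give $\sigma_1(\hat{B})=1$ (simultaneously yielding the simple $0$ and the simple $2$ in the spectrum of $\mathcal{L}$), so the non-zero product collapses to
\[
\prod_{\lambda\neq 0}\lambda = 2\prod_{i=2}^{r}(1-\sigma_i^2).
\]
One-side regularity now enters through the trace identity $\sum_{i=1}^{r}\sigma_i^2=\operatorname{tr}(\hat{B}\hat{B}^T)=\tfrac{1}{d}\operatorname{tr}(BD_b^{-1}B^T)=\tfrac{1}{d}\sum_{j}b_j/b_j=m/d$, so $\sum_{i\geq 2}\sigma_i^2=m/d-1$ as a sharp equality.

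Applying AM-GM to the $r-1$ non-negative numbers $\{1-\sigma_i^2\}_{i\geq 2}$ gives
\[
\prod_{i=2}^{r}(1-\sigma_i^2)\leq\left(\frac{r-m/d}{r-1}\right)^{r-1},
\]
so it suffices to establish the scalar inequality $((r-s)/(r-1))^{r-1}\leq 1/s$ for $s:=m/d$ in the range $[1,r]$, where $s\geq 1$ uses $d\leq m$ and $s\leq r$ uses $m\leq nd$ (from connectedness). Equivalently, $g(s):=s(r-s)^{r-1}\leq (r-1)^{r-1}$; a single differentiation gives $g'(s)=r(1-s)(r-s)^{r-2}\leq 0$ on $[1,r]$, so $g$ is decreasing there and attains its maximum at $s=1$ with $g(1)=(r-1)^{r-1}$. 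Chaining the three estimates gives $\prod_{\lambda\neq 0}\lambda\leq 2d/m$ and hence $\tau(G)\leq D(G)/(mn)$.

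The main subtlety lies not in any single step but in the alignment between them: it is one-side regularity that upgrades $\sum\sigma_i^2$ to the \emph{exact} value $m/d$, and this exactness is precisely what permits AM-GM, combined with the scalar inequality for $g$, to match the target bound. Dropping one-side regularity leaves only $\sum_{(i,j)\in E}1/(d_i b_j)\geq |E|/(mn)$, which points in the wrong direction for AM-GM and does not close Ehrenborg's conjecture in general. Equality throughout the chain forces $s=1$ and equal $\sigma_i^2$ for $i\geq 2$, matching the fact that Ehrenborg's inequality is tight precisely on complete bipartite graphs $K_{n,m}$.
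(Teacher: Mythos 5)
Your proof is correct. At its core it runs on the same engine as the paper's argument: both apply AM--GM to the spectrum of a degree-normalized Schur complement of the bipartite Laplacian, and both terminate in literally the same one-variable inequality --- the paper's coefficient $\frac{(\varepsilon n-1)^{n-1}}{\varepsilon^{n}(n-1)^{n-1}}$ equals your $g(s)/(r-1)^{r-1}$ under $s=1/\varepsilon$, and the paper's monotonicity-in-$\varepsilon$ check is your computation of $g'$. The packaging, however, is genuinely different. The paper stays at the level of cofactors: it block-diagonalizes $L(G)$ by explicit transvections $S_{i,j}$, identifies the block $C=D_a-BD_b^{-1}B^{T}$, and combines Lemma~\ref{thm:techlem} (a cofactor-sum inequality) with the observation that all principal cofactors of $C$ are proportional to $\tau(G)$. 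You instead invoke the normalized-Laplacian product formula $\tau(G)=\frac{D(G)}{2|E|}\prod_{\lambda\neq 0}\lambda$, read the spectrum off the singular values of $\hat B$, and let one-side regularity enter through the exact trace $\operatorname{tr}(\hat B\hat B^{T})=m/d$; this replaces the transvection bookkeeping and the cofactor lemma with standard spectral facts, your AM--GM runs over $\min(n,m)-1$ terms rather than the paper's $n-1$, and you Schur-complement out the opposite (regular) side. Your route is shorter and makes the role of regularity transparent --- it is exactly what upgrades the trace from the useless lower bound $|E|/(mn)$ to an exact value; the paper's route has the advantage that its machinery ($S_{i,j}$ plus Lemma~\ref{thm:techlem}) is reused verbatim for the generalized Laplacian in Theorem~\ref{thm:twostats}, which your formulation would need to be adapted to handle. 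Two minor points for a final write-up: the degenerate case $r=\min(n,m)=1$ (stars), where the product over $i\ge 2$ is empty and connectedness forces $s=1$, deserves one explicit sentence, and your closing claim that equality holds ``precisely'' for complete bipartite graphs is an overstatement in general (the bound is sharp for all Ferrers graphs) though it is accurate within the one-side regular class.
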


Further we will need the following technical lemma. The proof of it in a more general setting can be found in \cite{Grone85}. However for the sake of completeness we present a proof of the lemma here.
\begin{lemma}\label{thm:techlem}
Let $A$ be a non-negative semidefinite singular Hermitian matrix of size $n \times n$.
Denote by $ (A | (i, i)) $ the matrix of order $ n-1 $ obtained from $ A $ by deleting the $ i $-th row and the $ i $-th column, then
\begin{equation}\label{eq:techlem}
\sum_{i=1}^n a_{ii}\det(A|(i,i)) \leq \left(\frac{n}{n-1}\right)^{n-1} \prod_{i=1}^n a_{ii}.
\end{equation}
\end{lemma}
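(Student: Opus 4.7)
The plan is to renormalize $A$ so that every diagonal entry equals $1$ and then apply AM-GM to the eigenvalues. First I would dispose of the degenerate case: if $a_{ii}=0$ for some $i$, then positive semidefiniteness forces the entire $i$-th row and column of $A$ to vanish (any $2\times 2$ principal submatrix through a zero diagonal entry must itself be PSD), so every $\det(A\mid (j,j))$ contains a zero row, and both sides of \eqref{eq:techlem} are $0$. From now on assume $a_{ii}>0$ for all $i$, set $D=\mathrm{diag}(\sqrt{a_{11}},\ldots,\sqrt{a_{nn}})$, and put $B=D^{-1}AD^{-1}$. Conjugation by the invertible diagonal matrix $D^{-1}$ preserves rank, positive semidefiniteness, and Hermiticity, and $b_{ii}=1$ for every $i$. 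A direct computation (using that $D$ is diagonal) gives $\det(B\mid (i,i))=\det(A\mid (i,i))/\prod_{j\neq i} a_{jj}$, so after dividing \eqref{eq:techlem} through by $\prod_{j} a_{jj}$ the inequality reduces to
$$\sum_{i=1}^n \det(B\mid (i,i)) \leq \left(\frac{n}{n-1}\right)^{n-1}.$$

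Next I would invoke the standard identity $\sum_i \det(B\mid (i,i))=e_{n-1}(\nu_1,\ldots,\nu_n)$, where $e_{n-1}$ is the elementary symmetric polynomial of degree $n-1$ and $\nu_1\geq\cdots\geq\nu_{n-1}\geq\nu_n=0$ are the eigenvalues of $B$; this follows by reading off the coefficient of $\lambda$ in the characteristic polynomial $\det(\lambda I-B)=\prod_k(\lambda-\nu_k)$. Because $\nu_n=0$,
$$e_{n-1}(\nu_1,\ldots,\nu_n)=\prod_{k=1}^{n-1}\nu_k$$
(and this is simply $0$ if the rank of $B$ is strictly less than $n-1$, making the inequality trivial). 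Finally, $\sum_{k=1}^{n-1}\nu_k=\mathrm{tr}(B)=n$, so AM-GM on these $n-1$ nonnegative numbers gives
$$\prod_{k=1}^{n-1}\nu_k\leq\left(\frac{n}{n-1}\right)^{n-1},$$
which completes the argument.

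There is no serious obstacle in this plan: essentially all the substance is absorbed into a single application of AM-GM to the $n-1$ nonzero eigenvalues. The two steps that deserve a line of verification are the preservation of PSD/singularity under diagonal conjugation and the principal-minor identity identifying $\sum_i \det(B\mid(i,i))$ with $e_{n-1}$ of the spectrum, both of which are routine. Equality in AM-GM requires $\nu_1=\cdots=\nu_{n-1}$, which is exactly the ``regular'' regime in which the bound is tight and in which the lemma will be applied to Laplacians in Section~\ref{sec:BoundsRegular}.
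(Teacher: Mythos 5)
Your proposal is correct and follows essentially the same route as the paper: dispose of the zero-diagonal case via positive semidefiniteness, normalize to unit diagonal by conjugating with $\mathrm{diag}(1/\sqrt{a_{ii}})$, identify $\sum_i\det(B\mid(i,i))$ with the product of the $n-1$ nonzero eigenvalues via the characteristic polynomial, and finish with AM--GM using $\mathrm{tr}(B)=n$. No discrepancies worth noting.
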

\begin{proof}
If one of the diagonal elements, say $a_{kk}$, of matrix $A$ is equal to 0, then
$0=\langle Ae_k,e_k\rangle=
\langle \sqrt{A}e_k,\sqrt{A}e_k\rangle$,
hence $\sqrt{A}e_k=0$ and $Ae_k=\sqrt{A}\cdot \sqrt{A} e_k=0$. Thus for matrix $A$ all its entries of $k$-th row and $k$-th column are equal to zero. Hence all the terms
on the left of \eqref{eq:techlem} are zero and \eqref{eq:techlem} holds trivially.\\
Now assume that $a_{kk}>0$ for all
$k=1,\ldots,n$. 
Denote $B = DAD$, where $ D $ is a diagonal matrix for which $d_{ii} = 1/\sqrt{a_{ii}}$. 
Then $b_{ii} = 1$ for any $i$, and the original inequality is equivalent to the following:
$$
\sum_{i=1}^n \det(B|(i,i)) \leq \left(\frac{n}{n-1}\right)^{n-1}.
$$
We denote the eigenvalues of $ B $ as
$\beta_1 \geq \beta_2 \geq \dots \geq \beta_n = 0$.
Then
$$
\prod_{i=1}^n(\beta_i-t)
=\det(B-tI)=\det(B)-t\sum_{i=1}^n \det(B|(i,i))+
\ldots
$$
hence $$\sum_{i=1}^n \det(B|(i,i))=
\beta_1\beta_2 \dots \beta_{n-1}\leq
\left( \frac{\beta_1 + \beta_2 + \dots + \beta_{n-1}}{n-1}\right)^{n-1}= \left( \frac{n}{n-1}\right)^{n-1}.
$$
\end{proof}

Now with help of Lemma \ref{thm:techlem} we are able to obtain an upper bound for the minor of order $ n-1 $ via the product of diagonal elements.

\begin{proof}[Proof of the Theorem  \ref{thm:oneside_conj}]
First we enumerate vertices of $ G $ so that the diagonal elements of the Laplacian $ L(G) $ were
presented in the following order: $ a_1, a_2, \dots, a_n, b_1, b_2, \dots, b_m $. For square matrix $ X $ of size $ m + n $ we define the following transformation.
$$
S_{i, j}(X) = T_{n+j, i}\left(\frac{1}{b_j}\right) X  T_{i, n+j}\left(\frac{1}{b_j}\right),
$$
where $T_{i,j}(\lambda):=I+\lambda e_{ji}$ 
is a transvection.
 \par We apply transformation $ S_{i, j} $ to $ L(G) $ for every pair of coordinates  $ i \in [n] $, $ j \in [m] $, for which $ L(G)_{i, n + j} $ is equal to minus one.
 
The resulting matrix  $\widehat{L(G)} $ will remain non-negative semidefinite, symmetric, singular, and can be presented in the following form:
\begin{center}
\begin{equation*}
\widehat{L(G)} =
\begin{tikzpicture}[baseline=(m-4-1.base)]

\matrix [matrix of math nodes,left delimiter=(,right delimiter=),row sep=0.1cm,column sep=0.1cm] (m) {
      \; & \; & \;   & 0 & 0 & \dots & 0 \\
       \; & C &\;& \vdots & \vdots & \dots & \vdots\\
        \;    &   \;  &\;& 0 & 0 & \dots & 0 \\
         0    &  \dots   & 0  & b_1 &  0 & \dots & 0 \\
         0    &  \dots   & 0  & 0 & b_2 &\dots & 0 \\
       \vdots & \dots & \vdots & \vdots & \vdots & \ddots & \vdots\\
       0    &   \dots   & 0   & 0 & 0 &\dots & b_m\\ };

\draw[dashed] ($0.5*(m-1-3.north east)+0.5*(m-1-4.north west)$) -- ($0.5*(m-7-4.south east)+0.5*(m-7-3.south west)$);

\draw[dashed] ($0.5*(m-3-1.south west)+0.5*(m-4-1.north west)$) -- ($0.5*(m-3-7.south east)+0.5*(m-4-7.north east)$);

\end{tikzpicture}
\end{equation*}
\end{center}
 \par The transformations set to zero the upper right and the lower left blocks. Matrix $ C $ of size $ n \times n $ can be fully described in a following way:

\[
    c_{ij}= 
\begin{cases}
    a_i - \sum\limits_{k:(i, k)\in E} \frac{1}{b_k},& \text{if } i=j\\
     -\sum\limits_{\substack{k:(i, k) \in E \\ (j, k) \in E}} \frac{1}{b_k},& \text{otherwise}
\end{cases}
\]
Similarly to $\widehat{L(G)}$, matrix $ C $ is non-negative semidefinite, singular and symmetric, which means that Lemma \ref{thm:techlem} is applicable for $C$: 
\begin{equation}\label{eq:lem_conseq}
\sum_{i=1}^n c_{ii} \det(C|(i,i)) \leq \left(\frac{n}{n-1}\right)^{n-1} \prod_{i=1}^n c_{ii}.
\end{equation}
In the matrix transformations applied above, we subtracted the rows and columns of the second component from the rows and columns of the first
component, therefore all cofactors 
$ \det (C | (i, i)) $ are still equal to each other, hence \eqref{eq:lem_conseq} transforms into
\begin{equation}\label{eq:result}
\tau(G)  \leq \left(\frac{n}{n-1}\right)^{n-1}  \frac{\prod_{i=1}^n (1 - \sum\nolimits_{(i, k) \in E} \frac{1}{a_ib_k})}{\sum_{i=1}^n (a_i - \sum\nolimits_{(i, k) \in E} \frac{1}{b_k})}\prod_{i=1}^n a_{i} \prod_{j=1}^m b_{j}.
\end{equation}

Until this moment we did not use the regularity condition, but now it is time to do it. We rewrite the number of edges as $ | E |  =  \varepsilon mn $, where $ \varepsilon \in \left[0, 1 \right] $, then, without loss of generality, we will assume that the second component is regular, that is $ b_j = \varepsilon n $ for any $ j \in [m] $, hence the inequality \eqref{eq:result} can be simplified: 
\begin{equation*}
\begin{gathered}
\tau(G)  \leq \left(\frac{n}{n-1}\right)^{n-1}  \frac{(1 - \frac{1}{\varepsilon n})^n}{m(\varepsilon n - 1)}\prod_{i=1}^n a_{i} \prod_{j=1}^m b_{j} =  \\
= \frac{(\varepsilon n - 1)^{n-1}}{\varepsilon^n(n - 1)^{n-1}} \frac{D(G)}{mn}.
\end{gathered}
\end{equation*}

It remains to show that the coefficient obtained beside $ \frac{D (G)}{mn} $ does not exceed one. To do this, first check that the expression $\frac{(\varepsilon n - 1)^{n-1}}{\varepsilon^n(n - 1)^{n-1}}$ is increasing in $\varepsilon$:

\begin{equation*}
\begin{gathered}
\left(\frac{(\varepsilon n - 1)^{n-1}}{\varepsilon^n}\right)' = \left(\frac{(\varepsilon n - 1)^{n-2}}{\varepsilon^{n+1}}\right) (\varepsilon(n-1)n - (\varepsilon n - 1)n) =  \\
= \left(\frac{(\varepsilon n - 1)^{n-2}}{\varepsilon^{n+1}}\right) (1 - \varepsilon )n \geq 0.
\end{gathered}
\end{equation*}

The last inequality holds for $ 1/n \leq \varepsilon \leq 1 $, which corresponds to the existing constraints, since $ 1 \leq b_j = \varepsilon n \leq n $. Now it will be enough to consider the extreme case $ \varepsilon = 1 $. In this case, the equality holds. 

\end{proof}

\subsection {Generalization for a biregular graph}\label{sec:generalization}
\par In this chapter, using the 
technique similar to that we applied in the proof of Theorem \ref{thm:oneside_conj}, we give an upper bound for the number of spanning trees with constraints on the degree of a particular vertex. This result can be viewed as a generalization of the Theorem \ref{thm:oneside_conj} for biregular
bipartite graphs.
\par Further we will consider a bipartite biregular graph $ G = (V, E) $, in which the sizes of the first and second components are equal to $ n $ and $ m $, respectively. We denote by $ a $ and $b = an/m$ the degrees of the vertices from the first and second components, respectively.\\
Let $ v $ be an arbitrary vertex from the second component. Let $ \tau^v _{> k}(G) $ denote the number of spanning trees of the graph $ G $ in which  $ v $ has degree strictly greater than $ k $. Similarly, $ \tau^v_{\leq k} (G) $ will denote the number of spanning trees of the graph $ G $ in which  $ v $ has degree at most $ k $.
\begin{theorem}\label{thm:twostats}
Assume that $k=\theta\cdot b/a$ is a non-negative integer number which is

less than $b$.
Then, under the above assumptions, the following inequalities hold.
\begin{enumerate}[label=(\roman*)]
    \item $\tau^v_{>k}(G) \leq \left(\theta^{-1}\exp\left(\frac{\theta-1}\theta\right)\right)^k  \frac{ a^{n} b^{m}}{mn}$ if $\theta \geq 1$
    \item $\tau^v_{\leq(k+1)}(G) \leq \left(\theta^{-1}\exp\left(\frac{\theta-1}\theta\right)\right)^k  \frac{ a^{n} b^{m}}{(m-1)n}$ if $\theta \leq 1$
\end{enumerate}

\end{theorem}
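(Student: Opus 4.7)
The plan is to reduce both bounds to a sharp estimate on a weighted spanning-tree polynomial, then to extract the two tail bounds by a Markov-type argument. For a parameter $x>0$ I weight every edge incident to $v$ by $x$ and every other edge by $1$; let $L_x$ denote the resulting weighted Laplacian and put
\[
\tau(G;x)\;:=\;\sum_{j\ge 0}\tau^v_{=j}(G)\,x^{j},
\]
which equals any cofactor of $L_x$ by the matrix-tree theorem. Non-negativity of the coefficients immediately gives
\[
\tau^v_{>k}(G)\le x^{-(k+1)}\tau(G;x)\ \ (x\ge 1),\qquad \tau^v_{\le k+1}(G)\le x^{-(k+1)}\tau(G;x)\ \ (0<x\le 1),
\]
so it will suffice to bound $\tau(G;x)$ and then substitute $x=\theta$.

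To bound $\tau(G;x)$ I re-run the proof of Theorem \ref{thm:oneside_conj} on $L_x$. The transvection coefficient $\lambda=1/b$ continues to annihilate the $(i,n+j)$-entry, because in the weighted case $w_{i,v}/D_v=x/(xb)=1/b$, so the block-diagonalisation carries through unchanged. The resulting lower-right block is $\mathrm{diag}(b,\ldots,b,xb)$ with the entry $xb$ at the position of $v$; the resulting upper-left $n\times n$ block $C_x$ has diagonal
\[
c_{ii}(x)=\begin{cases} a(b-1)/b, & i\not\sim v,\\ (a+x-1)(b-1)/b, & i\sim v,\end{cases}
\]
and, since exactly $b$ first-component vertices neighbour $v$,
\[
\sum_{i=1}^n c_{ii}(x)=(b-1)(m+x-1),\qquad \prod_{i=1}^n c_{ii}(x)=\frac{(b-1)^{n}}{b^{n}}\,a^{n-b}(a+x-1)^{b}.
\]
The transvection coefficients continue to sum to $1$ along each second-component vertex, so $\mathbf 1_{[n]}$ still lies in the kernel of $C_x$, forcing the minors $\det(C_x|(i,i))$ to coincide for all $i\in[n]$. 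Moreover the transvections preserve $\det(L_x^{(i)})$ for each such $i$ (the operations $S_{i,j}$ modify only row/column $i$, which is deleted, while the operations $S_{i',j}$ with $i'\ne i$ act as ordinary row/column operations on the minor), hence $\tau(G;x)=xb^{m}\det(C_x|(i,i))$, and Lemma \ref{thm:techlem} applied to $C_x$ yields
\[
\tau(G;x)\;\le\;\left(\tfrac n{n-1}\right)^{n-1}\!\frac{(b-1)^{n-1}b^{m-n}\,x\,(a+x-1)^{b}\,a^{n-b}}{m+x-1}.
\]

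Setting $x=\theta$ and using $1+y\le e^{y}$ in the form $(a+\theta-1)^{b}=a^{b}\bigl(1+(\theta-1)/a\bigr)^{b}\le a^{b}\exp\bigl((\theta-1)b/a\bigr)=a^{b}\exp\bigl((\theta-1)k/\theta\bigr)$, the Markov inequality combined with the previous display turns both right-hand sides into
\[
\bigl(\theta^{-1}e^{(\theta-1)/\theta}\bigr)^{k}\cdot\left(\tfrac n{n-1}\right)^{n-1}\!\frac{(b-1)^{n-1}b^{m-n}a^{n}}{m+\theta-1}.
\]
It remains to match the residual factor with $a^{n}b^{m}/(mn)$ in case (i) and with $a^{n}b^{m}/((m-1)n)$ in case (ii). Both reductions boil down to the elementary inequality $(n/(n-1))^{n-1}(b-1)^{n-1}n\le b^{n}$, which is AM-GM applied to $n-1$ copies of $b-1$ and one copy of $n-1$ (equivalently, it is what Theorem \ref{thm:oneside_conj} reduces to in the biregular case at $\theta=1$). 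Bounding $m+\theta-1\ge m$ for $\theta\ge 1$ closes case (i); bounding $m+\theta-1\ge m-1$ for $\theta\ge 0$ closes case (ii) (and the borderline $\theta=0$, i.e.\ $k=0$, is covered by the convention $0^{0}=1$ or equivalently by letting $x\to 0^{+}$).

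The main obstacle is the bookkeeping around the weighted transvections: verifying the coincidence $w_{i,v}/D_v=1/b$ that keeps $\lambda=1/b$ working, recomputing $c_{ii}(x)$ on the two classes of first-component vertices, and tracking the extra factor of $x$ coming from the diagonal entry $xb$ of $v$ in the identity $\tau(G;x)=xb^{m}\det(C_x|(i,i))$. Once this framework is in place, the single parameter choice $x=\theta$ together with $1+y\le e^{y}$ produces the exponential factor $\exp((\theta-1)/\theta)$ essentially for free.
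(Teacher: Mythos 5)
Your proposal is correct and follows essentially the same route as the paper: a weighted Laplacian in the single variable attached to $v$ (the paper's generalized Kirchhoff polynomial $P_G(y_1)$, identical to your $\tau(G;x)$), the same transvections and Lemma \ref{thm:techlem} to bound it by $Q$, and the same Markov-type truncation of the non-negative coefficients. The only (harmless) deviation is the evaluation point --- you take $x=\theta$ and absorb the slack via $\bigl(1+\tfrac{\theta-1}{a}\bigr)^{b}\le e^{(\theta-1)k/\theta}$, whereas the paper substitutes the optimized $y_1=1+\tfrac{a(\theta-1)}{a-\theta}$ --- and your AM--GM verification of $\bigl(\tfrac{n}{n-1}\bigr)^{n-1}(b-1)^{n-1}n\le b^{n}$ replaces the paper's monotonicity argument in $\varepsilon=b/n$; both land on the stated bounds.
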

In order to prove Theorem \ref{thm:twostats} we will use the same technique that we applied while estimating the determinant in Theorem \ref{thm:oneside_conj}, however this time we will need the following generalization of Kirchhoff's Matrix-Tree Theorem (see, e.\,g., \cite[p. 35, problem 10]{L}).
\begin{theorem*}(Generalized Kirchhoff's Theorem)
For an arbitrary graph $ G = (V, E) $, a generalized Laplacian is a matrix $ L(G) $, depending on free variables $ x_1, \dots x_{| V |} $ defined as follows
 \[
    a_{ij}= 
\begin{cases}
    \sum_{(i, k) \in E}x_ix_k,  & \text{если } i=j\\
     -x_ix_j ,& \text{если } v _iv_j \in E\\
     0, & \text{иначе}
\end{cases}
\]
\par For such matrix the cofactors of all elements are equal as polynomials to the generating polynomial $ P_G $ for the number of spanning trees, that is, for any pair of indexes $(i, j)$ 
\begin{equation}\label{eq:glaplacian}
   \det(L(G)|(i,j)) =  \sum_{t \in T_G} \prod_{i=1}^{|V|}x_i^{\deg_t(i)} =: P_G(x_1, \dots, x_{|V|}). 
\end{equation}

\par Here $ T_G $ denotes the set of all spanning trees of the graph $ G $ and $ \deg_t(i) $ is the degree of the $ i $-th vertex in the tree $t$.
\end{theorem*}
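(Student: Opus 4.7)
The plan is to reduce both (i) and (ii) to a single polynomial evaluation via the Generalized Kirchhoff's Theorem. Setting $x_v = s$ for a parameter $s > 0$ and $x_u = 1$ for every other vertex makes each diagonal cofactor of the generalized Laplacian $L$ equal to $P_G(1,\dots,s,\dots,1) = \sum_{t \in T_G} s^{\deg_t(v)}$. A Markov-type estimate then gives $s^{k+1}\,\tau^v_{>k}(G) \leq P_G$ for $s \geq 1$ (since $s^{\deg_t(v)} \geq s^{k+1}$ whenever $\deg_t(v) \geq k+1$) and $s^{k+1}\,\tau^v_{\leq k+1}(G) \leq P_G$ for $0 < s \leq 1$ (since then $s^{\deg_t(v)} \geq s^{k+1}$ whenever $\deg_t(v) \leq k+1$), so it suffices to upper-bound $P_G$ sharply and divide by $s^{k+1}$; I shall eventually take $s = \theta$, which sits in $[1,\infty)$ for (i) and in $[0,1]$ for (ii).

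To control $P_G$ I would follow the proof of Theorem \ref{thm:oneside_conj} verbatim, now with the extra parameter $s$ riding on the vertex $v$. Applying the transvections $S_{i,j}$ with scaling $1/b$ at every edge $u_iv_j$ (including those incident to $v$) brings $L$ to the block-diagonal form $\tilde C \oplus (sb) \oplus bI_{m-1}$, where $\tilde C$ is a symmetric PSD matrix of size $n$ whose diagonal entries are $(b-1)(a-1+s)/b$ at each of the $b$ vertices in $N(v)$ and $a(b-1)/b$ at the remaining $n-b$ vertices, so that $\mathrm{tr}(\tilde C) = (b-1)(m+s-1)$ (using $bm=na$). A direct check shows that the kernel of the transvected Laplacian is the indicator of the first component, hence $\mathbf{1}_n \in \ker(\tilde C)$; moreover the identity $\mathrm{adj}(L) = P_G\,\mathbf{1}\mathbf{1}^\top$ (which is Generalized Kirchhoff in matrix form) transforms to $\mathrm{adj}(T^\top L T) = P_G\,(T^{-1}\mathbf{1})(T^{-1}\mathbf{1})^\top$, from which every diagonal cofactor of $\tilde C$ equals $P_G(s)/(sb^m)$. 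Lemma \ref{thm:techlem} applied to $\tilde C$ then gives
\begin{equation*}
P_G(s) \leq \left(\frac{n}{n-1}\right)^{n-1}\frac{s\,b^m \prod_i \tilde C_{ii}}{\mathrm{tr}(\tilde C)} = \left(\frac{n}{n-1}\right)^{n-1}\frac{s\,b^{m-n}(b-1)^{n-1}(a-1+s)^b\,a^{n-b}}{m+s-1}.
\end{equation*}
The right-hand side at $s=1$ is exactly the intermediate expression that the proof of Theorem \ref{thm:oneside_conj} shows to be at most $D(G)/(mn)$, so dividing yields $P_G(s) \leq \frac{D(G)}{mn}\,\frac{s\,m\,(a-1+s)^b}{a^b(m+s-1)}$.

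The final step is the substitution $s = \theta$ together with $b/a = k/\theta$ and the elementary inequality $(1+x)^b \leq e^{xb}$, which produce
\begin{equation*}
\frac{P_G(\theta)}{\theta^{k+1}} \leq \frac{D(G)}{n(m+\theta-1)}\,\theta^{-k}\,e^{k(\theta-1)/\theta}.
\end{equation*}
In regime (i), $\theta \geq 1$ forces $m+\theta-1 \geq m$, and I recover the stated bound with denominator $mn$; in regime (ii), $0 \leq \theta \leq 1$ forces $m+\theta-1 \geq m-1$, yielding the stated bound with denominator $(m-1)n$. The step I expect to be most delicate is verifying that the transvections on the generalized Laplacian really produce a singular block $\tilde C$ whose kernel is exactly $\mathbf{1}_n$, so that all of its diagonal cofactors coincide and Lemma \ref{thm:techlem} can be applied with a common factor on the left-hand side; once this covariance of the adjugate under the transvections is established, everything else is direct computation.
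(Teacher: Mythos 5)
Your proposal does not prove the statement in question. The statement to be proved is the Generalized Kirchhoff's Theorem itself --- the identity $\det(L(G)|(i,j)) = \sum_{t\in T_G}\prod_i x_i^{\deg_t(i)}$ asserting that every cofactor of the weighted Laplacian equals the spanning-tree generating polynomial. Your argument instead \emph{assumes} this identity: you invoke it in your opening sentence to identify the diagonal cofactors with $P_G(1,\dots,s,\dots,1)$, and again midway through in the form $\mathrm{adj}(L)=P_G\,\mathbf{1}\mathbf{1}^{\top}$, which you explicitly call ``Generalized Kirchhoff in matrix form.'' What you have actually written is a proof sketch of Theorem \ref{thm:twostats} (the bounds on $\tau^v_{>k}$ and $\tau^v_{\leq k+1}$), built on top of the very statement you were asked to establish. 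Relative to the assigned statement this is circular: nothing in your text addresses why the cofactors of $L(G)$ enumerate spanning trees with the weight $\prod_i x_i^{\deg_t(i)}$.

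A genuine proof of the statement would proceed, for instance, by writing $L(G)=N N^{\top}$ for the weighted incidence matrix $N$ whose column for the edge $v_iv_j$ has entries $x_ix_j$ distributed as $\pm\sqrt{x_ix_j}$ (or, more conveniently, by the standard Cauchy--Binet expansion of $\det(L(G)|(i,i))$ over $(|V|-1)$-subsets of edges, where a subset contributes $\prod_{v_iv_j\in t}x_ix_j=\prod_i x_i^{\deg_t(i)}$ exactly when it forms a spanning tree and $0$ otherwise), or by deletion--contraction induction on the weighted matrix-tree polynomial. The paper itself does not supply such an argument either --- it states the theorem with a citation to \cite{L} and only remarks that the substitution $x_1=\dots=x_{|V|}=1$ recovers the classical Matrix-Tree Theorem and that the associated quadratic form $\sum_{v_iv_j\in E}x_ix_j(f(v_i)-f(v_j))^2$ yields positive semidefiniteness for non-negative weights --- but a citation is not what was asked of you, and in any case your text supplies neither a citation-level justification nor a proof of the identity.
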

It is easy to see that the substitution $ x_1 = \dots = x_{| V |} = 1 $ gives the original Kirchhoff's theorem.
In general, \eqref{eq:glaplacian} calculates the sum of the weights of all spanning trees
in a graph where the weight of an edge between $ v_i$ and $v_j $ is equal to $ x_ix_j $.
The generalized Laplacian corresponds to the
quadratic form $\sum_{v_iv_j\in E} x_ix_j(f(v_i)-f(v_j))^2$
on the space $ \mathbb{R}^V $, hence
for non-negative values of the variables
$ x_i $
its non-negative definiteness
immediately follows.

\begin{proof} [Proof of the Theorem \ref{thm:twostats}]

Suppose that the vertices of the first component correspond to the variables $x_1$, $x_2$, $\dots$, $x_n$, and the vertices of the second component correspond to $y_1$, $y_2$, $\dots$, $y_n$. Without loss of generality, we will assume that vertex $ v $ corresponds to variable $ y_1 $. Now we substitute
the value 1 for each variable $ x_1, x_2, \dots, x_n, y_2, \dots, y_m $ in the generating polynomial. What remains is a polynomial in a single variable $ y_1 $.

\par After the substitution described above matrix $ L(G, y_1) $ in a suitable basis will have the following form:
\newpage
\begin{equation*}
L(G, y_1)=
\begin{tikzpicture}[baseline=(m-5-1.base)]
\matrix [matrix of math nodes,left delimiter=(,right delimiter=),
             column sep=0pt,
             row sep=0pt,
             nodes={
                    minimum size=1.4em,
                    anchor=center
                    }
             ] (m) {
        {\scriptstyle (a-1) + y_1 } & \dots  & 0   & \;  & \;  & \;   &-y_1  & \;  & \;  & \;  \\
       \vdots   & \ddots  & \vdots   & \;  & 0  & \;   &\vdots  & \;  & *  & \;  \\
       0  & \dots  & {\scriptstyle (a-1) + y_1 }   & \;  & \;  & \;   &-y_1  & \;  & \;  & \;  \\
       \;  & \;  & \;   & a  & \dots  & 0              &0   & \;  & \;  & \; \\
       \;  & 0  & \;    & \vdots  & \ddots  & \vdots   &\vdots  & \;  & *  & \;  \\
       \;  & \;  & \;   & 0  & \dots  & a              &0  & \;  & \;  & \;  \\
       -y_1  & \dots & -y_1   & 0  & \dots  & 0           & by_1   & 0   & \dots  & 0\\
       \;  & \;  & \;         & \;  & \;  & \;             &0   & b   & \dots  & 0 \\
       \;  & *  & \;          & \;  & *  & \;              &\vdots  & \vdots   & \ddots  & \vdots \\
       \;  & \;  & \;         & \;  & \;  & \;             &0  & 0   & \dots  & b \\};

\draw[dashed] ($0.4*(m-1-3.north west)+0.6*(m-1-4.north east)$) -- ($0.4*(m-1-3.north west)+0.6*(m-1-4.north east) + (0, -5.7)$);

\draw ($0.6*(m-1-6.north west)+0.4*(m-1-7.north east)$) -- ($0.6*(m-1-6.north west)+0.4*(m-1-7.north east) + (0, -5.7)$);

\draw[dashed] ($0.4*(m-1-7.north west)+0.6*(m-1-8.north east)$) -- ($0.4*(m-1-7.north west)+0.6*(m-1-8.north east) + (0, -5.7)$);

\draw[dashed] ($0.5*(m-3-1.south west)+0.5*(m-4-1.north west)$) -- ($0.5*(m-3-1.south west)+0.5*(m-4-1.north west) + (7.3, 0)$);

\draw ($0.5*(m-6-1.south west)+0.5*(m-7-1.north west)$) -- ($0.5*(m-6-1.south west)+0.5*(m-7-1.north west) + (7.3, 0)$);

\draw[dashed] ($0.5*(m-7-1.south west)+0.5*(m-8-1.north west)$) -- ($0.5*(m-7-1.south west)+0.5*(m-8-1.north west) + (7.3, 0)$);

\node[right=4pt of m-2-10] (right-2) {};
\node[right=4pt of m-3-10] (right-4) {};
\node[right=4pt of m-6-10] (right-6) {};
\node[right=4pt of m-9-10] (right-8) {};

\node[rectangle,right delimiter=\}] (del-right-1) at ($0.95*(right-2.west) +0.05*(right-4.west)$) 
{\tikz{\path (right-2.south east) rectangle (right-4.north west);}};
\node[right=26pt] at (del-right-1.west) {$b$};

\end{tikzpicture}
\end{equation*}
\par Here, $ * $ denotes minus one or zero according to the definition, and the sum of the elements in each row and in each column is equal to zero.

\par \par Next, as in Theorem \ref{thm:oneside_conj} we apply transformations $S_{i,j}$ to the generalized Laplacian, and then use Lemma \ref{thm:techlem}. In order to apply the lemma we need to calculate only diagonal elements of
the resulting matrix. Let us write them down:
\[
    a_{ii}= 
\begin{cases}
    (1-\frac{1}{b})(y_1 + (a-1)),& \text{при } 1 \leq i \leq b\\
     (1-\frac{1}{b})a,& \text{при } b < i \leq n\\
     by_1,& \text{при } i = n+1\\
     b,& \text{при } n+1 < i \leq m
\end{cases}
\]
\par We apply Lemma \ref{thm:techlem} to the resulting matrix and obtain the following inequality for any positive $ y_1 $:

$$
P_G(y_1)  \leq \left(\frac{n}{n-1}\right)^{n-1} \frac{(y_1 + (a-1))^b a^{n-b} (1 - \frac{1}{b})^n}{(1 - \frac{1}{b})(by_1 + b(a-1) + a(n-b))} b^m y_1.
$$
\par As in the previous proof, we denote
$b = \varepsilon n$, then the inequality above can be simplified using two relations: $an = bm$ and $\frac{(\varepsilon n - 1)^{n-1}}{\varepsilon^n(n - 1)^{n-1}} < 1$
for $\varepsilon=b/n$:
$$
P_G(y_1)  \leq  \frac{(y_1 + (a-1))^{b} a^{n-b} b^{m} y_1}{(y_1 + (m-1))n} := Q(y_1).
$$
\par We can represent the generating polynomial in the following way: $P_G(y_1) = c_by_1^b + \dots + c_2y_1^2 + c_1y_1$ (note that the free term of $ P_G $ is always zero). In these terms $\tau^v_{>k}(G) = \sum_{i = k+1}^b c_i$ and $\tau^v_{\leq(k+1)}(G)~=~\sum_{i = 1}^{k+1}c_i$.
Hence using the fact that all coefficients of $ P_G $ are non-negative, we obtain that for any $ y_1 \geq 1 $ the following inequalities hold:
\begin{align}\label{eq:fstcase}
\begin{gathered}
    \tau^v_{>k}(G) = c_{k+1} + \dots + c_b \leq \\
 \leq   c_{k+1} + c_{k+2}y_1 + \dots + c_by_1^{b-k-1}
\leq \frac{P_G(y_1)}{y_1^{k+1}} \leq  \frac{Q(y_1)}{y_1^{k+1}}.
\end{gathered}
\end{align}
\par Similarly, for $0 < y_1 \leq 1$ we have
\begin{align}\label{eq:sccase}
\tau^v_{\leq k+1}(G) = c_{1} + \dots + c_{k+1} \leq \frac{P_G(y_1)}{y_1^{k+1}} \leq  \frac{Q(y_1)}{y_1^{k+1}}.
\end{align}
\par Now we perform a substitution
$y_1= 1 + \frac{a(\theta - 1)}{(a - \theta)}$ in polynomial $\frac{Q(y_1)}{y_1^{k+1}}$, where $k = \theta b/a$.
Note that we can assume that $ \theta <a $, since the case $k \geq b$ is not interesting in terms of Theorem \ref{thm:twostats}.
\par For the first statement of the theorem we have $ \theta \geq 1 $ which implies that the chosen $ y_1 $ is at least one and the inequality \eqref{eq:fstcase} holds. Similarly, for the second statement \eqref{eq:sccase} holds. Hence in order to prove both statements we need to bound the expression obtained after the substitution:
\begin{align}\label{eq:both}
     \frac{(a-1)^{b-k}}{(a - \theta)^{b-k} \theta^{k} (\frac{a(\theta - 1)}{m(a - \theta)} + 1)} \frac{ a^{n} b^{m}}{mn}.
\end{align}
\par For the first statement we have $\theta \geq 1$, hence $\frac{a(\theta - 1)}{m(a - \theta)} > 0$ and it is possible to bound \eqref{eq:both} from above, which gives us the following inequality
$$
\tau^v_{>k}(G) \leq \theta^{-k} \left(1 + \frac{(\theta - 1)}{(a- \theta)}\right)^{(a-\theta) k/\theta} \frac{ a^{n} b^{m}}{mn} \leq \left(\theta^{-1}\exp\left(\frac{\theta-1}\theta\right)\right)^k  \frac{ a^{n} b^{m}}{mn},
$$
\par which corresponds to the statement of the theorem. Moreover, it is easy to verify that the expression under the power of $ k $ does not exceed one for any $ \theta > 0 $. Thus, we have improved the general estimate of the Theorem~\ref{thm:oneside_conj}.
\par Now let us prove the second statement.
Since $ a \geq1 $, for any $ \theta \leq 1 $ we have
\begin{align*}
    \frac{a(1 - \theta)}{m(a - \theta)} < \frac{1}{m}.
\end{align*}
\par Using the above inequality and \eqref{eq:both} we get
$$
\tau^v_{\leq k+1}(G) \leq  \left(\theta^{-1}\exp\left(\frac{\theta-1}\theta\right)\right)^k  \frac{ a^{n} b^{m}}{(m-1)n}.
$$
\par This proves the theorem. Note that the bound we got for the second statement
differs only by the factor $\frac{1}{(1-1/m)}$, which tends to one as the size of the corresponding component of the graph increases.
\end{proof}

\begin{remark}
Consider the case $ \theta = k = 0 $. Then the inequality (ii) of the Theorem \ref{thm:twostats} turns out to be weaker than the bound given by Theorem \ref{thm:oneside_conj} for the number of spanning trees with the leaf vertex
$v$.
However, in that situation we can use inequality \eqref{eq:both} and get an upper bound $(1-1/a)^b \frac{a^nb^m}{(m-1)n}$ which is better than what Theorem \ref{thm:oneside_conj} gives and it is also sharp for the complete bipartite graphs.
\end{remark}

\section*{Acknowledgment}
The author is grateful to her advisor Fedor Petrov for his helpful comments and suggestions concerning this paper. The main results of this paper were formulated in the author’s bachelor thesis.

\end{document}